\newtheorem{theorem}{Theorem}[section]
\newtheorem{proposition}[theorem]{Proposition}
\theoremstyle{definition}
\newtheorem{definition}[theorem]{Definition}
\newtheorem{example}[theorem]{Example}
\theoremstyle{remark}
\newtheorem{remark}[theorem]{Remark}
\numberwithin{equation}{section}
\begin{document}

\title{Construction of Rota-Baxter algebras via Hopf module algebras}

\author{Run-Qiang Jian}
\address{School of Computer Science, Dongguan University
of Technology, 1, Daxue Road, Songshan Lake, 523808, Dongguan, P.
R. China}
\email{jian.math@gmail.com}
\thanks{}

\subjclass[2010]{Primary 16W99; Secondary 16T05}

\date{}


\keywords{Rota-Baxter algebra, Hopf module algebra,
Yetter-Drinfeld module algebra}

\begin{abstract}
We propose the notion of Hopf module algebras and show that the
projection onto the subspace of coinvariants is an idempotent
Rota-Baxter operator of weight -1. We also provide a construction
of Hopf module algebras by using Yetter-Drinfeld module algebras.
As an application, we prove that the positive part of a quantum
group admits idempotent Rota-Baxter algebra structures.
\end{abstract}

\maketitle
\section{Introduction}
Rota-Baxter operators originate from a work of G. Baxter on
probability theory in 1960 (\cite{Bax}). In that paper, Baxter
showed that some well-known identities in the theory of
fluctuations for random variables can be deduced from a simple
relation of operators on a commutative algebra. At the end of
1960s, based on this work and others, Rota introduced the notion
of Baxter algebra, which is called Rota-Baxter algebra nowadays in
honor of the contribution of Rota, in his fundamental papers
\cite{Rot}. Roughly speaking, a Rota-Baxter algebra is an
associative algebra together with a linear endomorphism which is
analogous to the integral operator. During the past four decades,
this algebraic object has been investigated extensively by many
mathematicians with various motivations. At present, Rota-Baxter
algebras have become a useful tool in many branches of
mathematics, such as combinatorics (\cite{EGP}), Loday type
algebras (\cite{E}, \cite{EG2} ), pre-Lie and pre-Poisson algebras
(\cite{LHB}, \cite{AB}, \cite{A}), multiple zeta values
(\cite{EG3}, \cite{GZ}), and so on. Besides their own interest in
mathematics, Rota-Baxter algebras also have many important
applications in mathematical physics. For instance, the theory of
non-commutative Rota-Baxter algebras with idempotent Rota-Baxter
operator is used to provide an algebraic setting in the Hopf
algebraic approach to the Connes-Kreimer theory of renormalization
in perturbative quantum field theory (cf. \cite{CK1}, \cite{CK2},
\cite{EGK1}, \cite{EGK2}).

Recently, in the work \cite{J}, we provide examples of Rota-Baxter
algebras from the so-called quantum quasi-shuffle algebras. It
leads to the notion of braided Rota-Baxter algebra. This also
brings our attention to the relation between Rota-Baxter algebras
and quantum algebras. On one hand, this relation will enable one
to use tools from Rota-Baxter algebras to study quantum algebras.
On the other hand, one would expect to provide more interesting
examples of Rota-Baxter algebras via quantized algebras. The
second point of view is the aim of the present paper. We first
attempt to construct idempotent Rota-Baxter operators for their
importance which has been mentioned at the end of the first
paragraph. In other words, we need some sort of good algebras and
projections. In order to provide such good things, we introduce
the notion of Hopf module algebra which mixes Hopf module
structure and algebra structure via some relevant compatibility
conditions, and consider the projection onto the subspace of
coinvariants. These new objects are very natural since they can be
constructed from the bosonization of algebras in the category of
Yetter-Drinfeld modules. A particular important example is the
bosonization of Nichols algebras which is closely related to
quantum groups. Hence, thanks to these relations, we can show that
the positive part of a quantum group admits idempotent Rota-Baxter
algebra structures.

This paper is organized as follows. In Section 2, we introduce the
notion of Hopf module algebra and provide a construction of
idempotent Rota-Baxter operators of weight -1 on these algebras.
Consequently, we provide a construction of Hopf module algebras
via Yetter-Drinfeld module algebras, as well as an application to
quantum groups, in Section 3.

\section{Hopf module algebras and the main construction}

Throughout this paper, we fix a ground field $\mathbb{K}$ of
characteristic 0 and assume that all vector spaces, algebras,
coalgebras and tensor products are defined over $\mathbb{K}$. An
algebra is always assumed to be associative, but not necessarily
unital.

We always denote by $(H,\Delta,\varepsilon, S)$ a Hopf algebra. We
adopt Sweedler's notation for coalgebras and comodules: for any
$h\in H$, denote
$$\Delta(h)=\sum h_{(1)}\otimes  h_{(2)},$$ for a left $H$-comodule $(M,\delta_L)$ and any $ m\in
M$, denote
$$\delta_L(m)=\sum m_{(-1)}\otimes  m_{(0)},$$ and for a right $H$-comodule $(N,\delta_R)$ and any $ n\in
N$, denote
$$\delta_R(n)=\sum n_{(0)}\otimes  n_{(1)}.$$

Let $M$ and $N$ be two left $H$-modules. As usual, we consider
$M\otimes N$ as a left $H$-module via the diagonal action
$h\cdot(m\otimes n)=\sum h_{(1)}\cdot m\otimes h_{(2)}\cdot n$ for
$h\in H$, $m\in M$ and $n\in N$. A similar convention is used for
right modules, left comodules and right comodules.

\begin{definition}[\cite{Sw}]A right (resp. left) \emph{$H$-Hopf module} is a vector space $M$ equipped simultaneously with a right (resp. left) $H$-module structure and a right (resp. left) $H$-comodule structure
$\delta_R$ (resp. $\delta_L$) such that $\delta_R( m \cdot
h)=\delta_R(m)\Delta(h)$ (resp. $\delta_L(h \cdot m
)=\Delta(h)\delta_L(m)$) whenever $h\in H$ and $m\in
M$.\end{definition}

Let $M$ be a right $H$-Hopf module. We denote by $M^R$ the
subspace of right coinvariants, i.e.,
$$M^R=\{m\in M|\ \delta_R(m)=m\otimes 1_H\}.$$ It is well known
that the map $P_R:M\rightarrow M$ given by $P_R(m)=\sum
m_{(0)}\cdot S(m_{(1)})$ for $m\in M$ is the projection from $M$
onto $M^R$ (cf. \cite{Sw}). Similarly, for a left $H$-Hopf module,
we can define the subspace $M^L$ of left coinvariants and have the
projection formula $P_L(m)=\sum S(m_{(-1)})\cdot m_{(0)}$.

Now we consider algebras in the category of $H$-Hopf modules.

\begin{definition}A right \emph{$H$-Hopf module
algebra} is a right $H$-Hopf module $M$ together with an
associative multiplication $\mu:M\otimes M\rightarrow M$ such that
for any $h\in H$ and $m,m'\in M$,
\[\left\{
\begin{split}
(mm')\cdot h&=m(m'\cdot h),\\[3pt]
\delta_R(mm')&=\sum m_{(0)} m'_{(0)}\otimes m_{(1)}m'_{(1)},
\end{split}\right.
\]where $mm'=\mu (m\otimes m')$.
\end{definition}

The compatibility conditions between $\mu$ and the Hopf module
structure mean that $\mu$ is both a module morphism and a comodule
morphism. We can define the left $H$-Hopf module algebra in a
similar way.

In the following, we will construct Rota-Baxter operators on Hopf
module algebras. We first recall the definition of Rota-Baxter
algebras.

\begin{definition}Let $\lambda$ be an element in $\mathbb{K}$. A pair $(R,P)$ is
called a \emph{Rota-Baxter algebra of weight $\lambda$} if $R$ is
an algebra and $P$ is a linear endomorphism of $R$ satisfying that
for any $x,y\in R$,
$$P(x)P(y)=P(xP(y))+P(P(x)y)+\lambda P(xy).$$The map $P$ is called a \emph{Rota-Baxter operator}.\end{definition}

A useful observation is that if $P$ is a Rota-Baxter operator of
weight $\lambda$ with $\lambda\neq 0$, then $\nu\lambda^{-1}P$ is
a Rota-Baxter operator of weight $\nu$ for arbitrary $\nu \in
\mathbb{K}$. So the weight is not "intrinsic" in some sense.

We can state our main construction now.

\begin{theorem}Let $(M,\mu)$ be a right $H$-Hopf module
algebra. Then $(M,P_R)$ is a Rota-Baxter algebra of weight -1.
\end{theorem}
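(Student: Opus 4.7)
My plan is to establish the Rota-Baxter identity of weight $-1$,
$$P_R(x)P_R(y) = P_R(xP_R(y)) + P_R(P_R(x)y) - P_R(xy),$$
by proving the two stronger equalities
$$P_R(P_R(x)y) = P_R(x)P_R(y) \qquad \text{and} \qquad P_R(xP_R(y)) = P_R(xy),$$
from which the Rota-Baxter relation follows by direct cancellation. The crucial input is the fact recalled before the theorem: $P_R(m) \in M^R$, that is, $\delta_R(P_R(m)) = P_R(m) \otimes 1_H$ for every $m \in M$.

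First I would handle $P_R(P_R(x)y)$. Using the multiplicativity of $\delta_R$ (the second compatibility in the definition of a Hopf module algebra) together with $\delta_R(P_R(x)) = P_R(x) \otimes 1_H$, one gets $\delta_R(P_R(x)y) = \sum P_R(x)y_{(0)} \otimes y_{(1)}$. Plugging into the formula for $P_R$ and using the module compatibility $(mm')\cdot h = m(m' \cdot h)$ lets $P_R(x)$ pass through the action of $S(y_{(1)})$, giving $P_R(P_R(x)y) = P_R(x)\sum y_{(0)} \cdot S(y_{(1)}) = P_R(x)P_R(y)$.

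Next I would compute $P_R(xP_R(y))$ the same way: $\delta_R(xP_R(y)) = \sum x_{(0)}P_R(y) \otimes x_{(1)}$, so that $P_R(xP_R(y)) = \sum x_{(0)}\bigl(P_R(y) \cdot S(x_{(1)})\bigr)$, again by $(mm')\cdot h = m(m'\cdot h)$. On the other side, I would expand $P_R(xy)$ starting from $\delta_R(xy) = \sum x_{(0)}y_{(0)} \otimes x_{(1)}y_{(1)}$, apply the antipode antihomomorphism property $S(x_{(1)}y_{(1)}) = S(y_{(1)})S(x_{(1)})$, and then combine associativity of the module action with the module compatibility to rewrite
$$P_R(xy) = \sum x_{(0)}\bigl((y_{(0)} \cdot S(y_{(1)}))\cdot S(x_{(1)})\bigr).$$
The key bookkeeping point, and the closest thing to a genuine obstacle, is that the $y$-Sweedler sum may be pulled inside the bracket before the action of $S(x_{(1)})$---this is just bilinearity of the action but must be tracked carefully---so that $\sum y_{(0)} \cdot S(y_{(1)}) = P_R(y)$ appears and yields $P_R(xy) = P_R(xP_R(y))$.

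Finally, I would combine the two identities: substituting them into the right-hand side of the Rota-Baxter relation gives $P_R(xy) + P_R(x)P_R(y) - P_R(xy) = P_R(x)P_R(y)$, which is precisely the weight $-1$ identity. Idempotency, though not part of the claim, will be automatic from the observation that $P_R(x)P_R(y)$ already lies in $M^R$ (being a product of two coinvariants, by the comodule compatibility), since $P_R$ acts as the identity on $M^R$.
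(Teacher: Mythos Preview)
Your proposal is correct and follows essentially the same route as the paper: the paper performs one combined computation of $P_R(mP_R(m'))+P_R(P_R(m)m')-P_R(mm')$, but the cancellation it exhibits in the last two lines is precisely your pair of identities $P_R(P_R(x)y)=P_R(x)P_R(y)$ and $P_R(xP_R(y))=P_R(xy)$, obtained via the same ingredients (comodule and module compatibilities, $S$ as antihomomorphism, and $P_R(m)\in M^R$). Your version simply makes that structure explicit beforehand rather than reading it off at the end.
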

\begin{proof}For any $m,m'\in M$, we have\begin{eqnarray*}
\lefteqn{P_R\big(mP_R(m')\big)+P_R\big(P_R(m)m'\big)-P_R(mm')}\\[3pt]
&=&\sum (mP_R(m'))_{(0)}\cdot S((mP_R(m'))_{(1)})+\sum (P_R(m)m')_{(0)}\cdot S((P_R(m)m')_{(1)})\\[3pt]
&&-\sum (mm')_{(0)}\cdot S((mm')_{(1)})\\[3pt]
&=&\sum (m_{(0)}P_R(m')_{(0)})\cdot S(m_{(1)}P_R(m')_{(1)})+\sum (P_R(m)_{(0)}m'_{(0)})\cdot S(P_R(m)_{(1)}m'_{(1)})\\[3pt]
&&-\sum (m_{(0)}m'_{(0)})\cdot S(m_{(1)}m'_{(1)})\\[3pt]
&=&\sum (m_{(0)}P_R(m'))\cdot S(m_{(1)})+\sum (P_R(m)m'_{(0)})\cdot S(m'_{(1)})\\[3pt]
&&-\sum (m_{(0)}m'_{(0)})\cdot S(m_{(1)}m'_{(1)})\\[3pt]
&=&P_R(m)\sum m'_{(0)}\cdot S(m'_{(1)})\\[3pt]
&&+\sum (m_{(0)}P_R(m'))\cdot S(m_{(1)})-\sum (m_{(0)}(m'_{(0)}\cdot S(m'_{(1)}))\cdot S(m_{(1)})\\[3pt]
&=&P_R(m)P_R(m').
\end{eqnarray*}\end{proof}

\begin{remark}(i) One can show that if an algebra can be decomposed into a direct sum, as vector spaces, of two subalgebras, then the projection onto one of them is an idempotent Rota-Baxter operator of weight -1 (cf. \cite{Guo}). So we can give another proof of the above theorem by showing that both $\mathrm{Im}P_R=M^R$ and $\mathrm{Ker}P_R$ are subalgebras.

(ii) Similarly, if $(M,\mu)$ is a left $H$-Hopf module algebra,
then one can show that $(M,P_L)$ is a Rota-Baxter algebra of
weight -1.
\end{remark}

\begin{example}[\cite{Ra}]Let $A$ be a bialgebra. Suppose there are two bialgebra maps $i:
H\rightarrow A$ and $\pi: A\rightarrow H$ such that $\pi\circ
i=\mathrm{id}_H$. Set $\Pi=\mathrm{id}_A\star(i\circ S\circ \pi)$,
where $\star$ is the convolution product on $\mathrm{End}(A)$.
Then $\Pi$ is an idempotent Rota-Baxter operator of weight -1. In
fact, $A$ is a right $H$-Hopf module algebra with the following
right $H$-Hopf module structure: for any $a\in A$ and $h\in H$,
\[\left\{
\begin{split}
a\cdot h&=ai(h),\\[3pt] \delta_R(a)&=\sum a_{(1)}\otimes \pi(a_{(2)}).
\end{split}\right.
\]It is not hard to see that the projection from $A$ to $A^R$ is
just $\Pi$. So by the above theorem, we get the conclusion.
\end{example}

The above construction contains many other interesting examples.
Let $H$ be a graded Hopf algebra. Then $H$ can be written as a
direct sum $H=\bigoplus_{k\geq 0}H_k$ such that $H_kH_l\subset
H_{k+l}$ and $\Delta(H_k)\subset \bigoplus_{r=0}^k H_r\otimes
H_{k-r}$. Obviously, the 0-component $H_0$ is a Hopf algebra with
the induced operations. Note that the inclusion $i$ from $H_0$
into $H$ and the projection $\pi$ from $H$ onto $H_0$ verify the
required conditions. Another important example is the multi-brace
cotensor Hopf algebra which is related to quantum groups (cf.
\cite{FR}).

\section{Construction of Hopf module algebras}

In order to give a construction of $H$-Hopf module algebras, we
need some necessary notions about Yetter-Drinfeld modules.

A \emph{left $H$-Yetter-Drinfeld module} is a vector space $V$
equipped simultaneously with a left $H$-module structure $\cdot$
and a left $H$-comodule structure $\rho$ such that whenever $h\in
H$ and $v\in V$,
$$\sum h_{(1)}v_{(-1)}\otimes h_{(2)}\cdot v_{(0)}=\sum
(h_{(1)}\cdot v)_{(-1)}h_{(2)}\otimes (h_{(1)}\cdot v)_{(0)}.
$$The category of left $H$-Yetter-Drinfeld modules, denoted by ${}^H_H\mathcal{YD}$,
consists of the following data: its objects are left
$H$-Yetter-Drinfeld modules and morphisms are linear maps which
are both module and comodule morphisms. An algebra $(V,\mu)$ in
${}^H_H\mathcal{YD}$ is an associative algebra such that the
underlying space $V$ is an object in ${}^H_H\mathcal{YD}$ and
$(V,\mu)$ is both a module-algebra and a comodule-algebra. More
precisely, if we denote $vv'=\mu(v\otimes v')$ for any $v,v'\in
V$, then$$h\cdot (vv')=\sum (h_{(1)}\cdot v)(h_{(2)}\cdot
v'),$$and
$$\rho(vv')=\sum v_{(-1)}v'_{(-1)}\otimes v_{(0)} v'_{(0)}.$$

Let $(V,\mu)$ be an algebra in ${}^H_H\mathcal{YD}$. The
\emph{smash product} $V\#H$ of $V$ and $H$ is defined to be
$V\otimes H$ as a vector space and
$$(v\# h)(v'\# h')=\sum v(h_{(1)}\cdot v')\# h_{(2)}h',\ \
v,v'\in V, h,h'\in H,$$where we use $\#$ instead of $\otimes$ to
emphasize this new algebra structure.

\begin{theorem}Let $(V,\mu)$ be an algebra in
${}^H_H\mathcal{YD}$.

(i) The algebra  $V\#H$ is a right $H$-Hopf module algebra with
the following right $H$-module structure: for any $h,x\in H$ and
$v\in V$,
$$(v\# h)\cdot x =v\#  hx,
$$and
$$
\delta_R(v\#  h)=\sum (v\#  h_{(1)})\otimes  h_{(2)}.
$$Hence the map $P_R$ given by $P_R(v\#  h)=v\#\varepsilon(h)1_H$
is an idempotent Rota-Baxter operator of weight -1.

(ii) The algebra $V\#H$ is a left $H$-Hopf module algebra with the
following left $H$-module structure:
$$x \cdot (v\#  h)=\sum x_{(1)}\cdot v\# x_{(2)}h,
$$and
$$
\delta_L(v\otimes h)=\sum v_{(-1)}h_{(1)}\otimes (v_{(0)}\#
h_{(2)}).
$$Hence the map $P_L$ given by $P_L(v\#  h)=\sum S(v_{(-1)}h_{(2)})\cdot v_{(0)}\#S(v_{(-2)}h_{(1)})h_{(3)}$
is an idempotent Rota-Baxter operator of weight -1.\end{theorem}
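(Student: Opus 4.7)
The plan for each part is to first verify the axioms of a (right or left) $H$-Hopf module algebra on $V\#H$ equipped with the prescribed action and coaction, and then appeal directly to Theorem 2.4 (respectively Remark 2.6(ii)) to deduce the Rota-Baxter identity of weight $-1$. The explicit formulas for $P_R$ and $P_L$ are then read off from the general definitions $P_R(m)=\sum m_{(0)}\cdot S(m_{(1)})$ and $P_L(m)=\sum S(m_{(-1)})\cdot m_{(0)}$, and idempotence is automatic in each case since these are projections onto the corresponding spaces of coinvariants.

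For part (i), the right $H$-action is just right multiplication on the second tensor factor and the coaction is $\mathrm{id}_V\otimes\Delta$, so the right module axiom, the right comodule axiom and the Hopf-module compatibility all reduce to the associativity of $H$ and the coassociativity of $\Delta$. The module compatibility of $\mu$ is exactly the associativity of the smash product applied on the right to an element of the form $1_V\# x$. The comodule compatibility of $\mu$ unfolds, after using coassociativity to identify $h_{(1)(1)}\otimes h_{(1)(2)}\otimes h_{(2)}$ with $h_{(1)}\otimes h_{(2)}\otimes h_{(3)}$, into an identity that is immediate from the definition of the smash product. Once these are in place, Theorem 2.4 supplies the Rota-Baxter identity, and $P_R(v\# h)=\sum v\# h_{(1)}S(h_{(2)})=v\#\varepsilon(h)1_H$ follows from the antipode axiom.

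For part (ii), the left $H$-action is left multiplication by $1_V\# x$ inside the smash product, so associativity of the action is inherited from the smash product itself. The comodule axiom for $\delta_L$ is checked by combining the coassociativity of $\Delta$ with the coaction axiom for $\rho$, and the Hopf-module compatibility $\delta_L(x\cdot(v\# h))=\Delta(x)\,\delta_L(v\# h)$ is where the Yetter-Drinfeld condition first enters, applied to the coaction of $x_{(1)}\cdot v$. The principal obstacle is the comodule-algebra compatibility: expanding $\delta_L((v\# h)(v'\# h'))$ requires applying the YD condition to the coaction on $h_{(1)}\cdot v'$ inside the product $v(h_{(1)}\cdot v')\# h_{(2)}h'$, then using the fact that $V$ is a comodule-algebra to compute the coaction on $v(h_{(1)}\cdot v')$, and finally reorganizing iterated coproducts via coassociativity to match the right-hand side. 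With that step verified, Remark 2.6(ii) yields the Rota-Baxter identity, and the stated formula for $P_L$ follows by starting from $\sum S(v_{(-1)}h_{(1)})\cdot(v_{(0)}\# h_{(2)})$, expanding $\Delta(S(v_{(-1)}h_{(1)}))=\sum S((v_{(-1)}h_{(1)})_{(2)})\otimes S((v_{(-1)}h_{(1)})_{(1)})$, and invoking the definition of the left action.
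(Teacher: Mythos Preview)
Your proposal is correct and follows essentially the same approach as the paper: verify the Hopf module algebra axioms on $V\#H$ (the paper takes the underlying Hopf module structures as well-known and only writes out the two multiplicative compatibilities, with the Yetter--Drinfeld condition entering in the $\delta_L$-computation exactly where you indicate), then invoke Theorem~2.4 and its left-sided analogue to conclude. The only quibble is that your shortcut of interpreting the $H$-actions as multiplication by elements of the form $1_V\# x$ presumes $V$ is unital, which the paper does not assume; the paper instead checks these compatibilities by direct computation, using the $H$-module-algebra axiom for $V$ in part~(ii).
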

\begin{proof}The constructions of right and left Hopf module structures from left Yetter-Drinfeld modules given above are well-known. So the only thing to prove is the compatibility conditions.

(i) For any $v,v'\in V$ and $h,h',x\in H$, we
have\begin{eqnarray*}
((v\#h)(v'\#h'))\cdot x&=&\sum (v(h_{(1)}\cdot v')\# h_{(2)}h')\cdot x\\[3pt]
&=&\sum v(h_{(1)}\cdot v')\# h_{(2)}h' x\\[3pt]
&=&(v\#h)(v'\#h' x)\\[3pt]
&=&(v\#h)((v'\#h')\cdot x),
\end{eqnarray*}and\begin{eqnarray*}
\delta_R((v\#h)(v'\#h'))&=&\delta_R(\sum v(h_{(1)}\cdot v')\# h_{(2)}h')\\[3pt]
&=&\sum v(h_{(1)}\cdot v')\# h_{(2)}h'_{(1)}\otimes h_{(3)}h'_{(2)}\\[3pt]
&=&\sum (v\#h_{(1)})(v'\#h'_{(1)})\otimes h_{(2)}h'_{(2)}\\[3pt]
&=&\sum (v\#h)_{(0)}(v'\#h')_{(0)}\otimes
(v\#h)_{(1)}(v'\#h')_{(1)}.
\end{eqnarray*}

(ii) We have\begin{eqnarray*}
x\cdot((v\#h)(v'\#h')) &=&\sum x\cdot (v(h_{(1)}\cdot v')\# h_{(2)}h')\\[3pt]
&=&\sum x_{(1)}\cdot (v(h_{(1)}\cdot v')\# x_{(2)}h_{(2)}h')\\[3pt]
&=&\sum (x_{(1)}\cdot v)((x_{(2)}h_{(1)})\cdot v')\# x_{(3)}h_{(2)}h')\\[3pt]
&=&\sum ((x_{(1)}\cdot v)\# x_{(2)}h)( v'\# h')\\[3pt]
&=&(x\cdot(v\#h))(v'\#h'),
\end{eqnarray*}and\begin{eqnarray*}
\delta_L((v\#h)(v'\#h'))&=&\delta_L(\sum v(h_{(1)}\cdot v')\# h_{(2)}h')\\[3pt]
&=&\sum (v(h_{(1)}\cdot v'))_{(-1)}h_{(2)}h'_{(1)}\otimes (v(h_{(1)}\cdot v'))_{(0)}\#h_{(3)}h'_{(2)}\\[3pt]
&=&\sum v_{(-1)}(h_{(1)}\cdot v')_{(-1)}h_{(2)}h'_{(1)}\otimes (v_{(0)}(h_{(1)}\cdot v')_{(0)})\#h_{(3)}h'_{(2)}\\[3pt]
&=&\sum v_{(-1)}h_{(1)} v'_{(-1)}h'_{(1)}\otimes (v_{(0)}(h_{(2)}\cdot v'_{(0)}))\#h_{(3)}h'_{(2)}\\[3pt]
&=&\sum v_{(-1)}h_{(1)} v'_{(-1)}h'_{(1)}\otimes (v_{(0)}\#h_{(2)})(v'_{(0)}\# h'_{(2)})\\[3pt]
&=&\sum
(v\#h)_{(-1)}(v'\#h')_{(-1)}\otimes(v\#h)_{(0)}(v'\#h')_{(0)},
\end{eqnarray*}where the fourth equality follows from the Yetter-Drinfeld module condition.\end{proof}

We now give a concrete example of the above constructions which
helps us to obtain many Rota-Baxter algebras.

\begin{example}The algebra $H$ is an algebra in $ {}^H_H\mathcal{YD}$ with the
following left $H$-Yetter-Drinfeld module: for any $x,h\in H$,
\[
x\cdot h=\sum x_{(1)}h S(x_{(2)}),\ \ \rho(h)=\sum h_{(1)}\otimes
h_{(2)}.
\]Then the smash product on $H\# H$ is given by the following
formula:
$$(h^1\otimes h^2)(h^3\otimes h^4)=\sum
h^1h^2_{(1)}h^3S(h^2_{(2)})\otimes h^2_{(3)}h^4, h^i\in H.$$

Explicitly, the corresponding Rota-Baxter operators $P_R$ and
$P_L$ are given by $$P_R(h\#h')=h\#\varepsilon (h')1_H,$$ and
\begin{eqnarray*}
P_L(h\#h')&=&\sum S(h_{(2)}h'_{(2)})\cdot
h_{(3)}\#S(h_{(1)}h'_{(1)})h'_{(3)}\\[3pt]
&=&\sum S(h_{(3)}h'_{(3)})
h_{(4)}S^2(h_{(2)}h'_{(2)})\#S(h_{(1)}h'_{(1)})h'_{(4)}\\[3pt]
&=&\sum S(S(h_{(2)}h'_{(2)})h'_{(3)})
\#S(h_{(1)}h'_{(1)})h'_{(4)}.
\end{eqnarray*}
\end{example}

Finally, we apply our construction to Nichols algebras. Let $V$ be
a left $H$-Yetter-Drinfeld module and $R(V)=\bigoplus_{n\geq
0}R(n)$ be a graded Hopf algebra in ${}^H_H\mathcal{YD}$. We call
$R(V)$ a \emph{Nichols algebra} of $V$ if $\mathbb{K}\cong R(0)$
and $V\cong R(1)$ in ${}^H_H\mathcal{YD}$, and $R(V)$ is generated
as an algebra by $R(1)$ and the set of primitive elements is
exactly $R(1)$. These algebras play an important role in the
classification of pointed Hopf algebras (cf. \cite{AS2}). For more
detailed information about these algebras, we refer the reader to
\cite{AS}. Since $R(V)$ is an algebra in ${}^H_H\mathcal{YD}$, we
have Rota-Baxter algebra structures on $R(V)\#H$.

A particular important example is the quantum symmetric algebra.
Assume that $A=(a_{ij})_{1\leq i,j\leq N}$ is a symmetrizable
generalized Cartan matrix, and $(d_1,\ldots, d_N)$ are positive
relatively prime integers such that $(d_ia_{ij})$ is symmetric.
Let $q\in \mathbb{C}^\times$ and define $q_{ij}=q^{d_i a_{ij}}$.
Let $G=\mathbb{Z}^r\times \mathbb{Z}/l_1 \times\mathbb{Z}/l_2
\times\cdots \mathbb{Z}/l_p$ and $H=K[G]$ be the group algebra of
$G$. We fix generators $K_1,\ldots, K_N$ of $G$ ($N=r+p$). Denote
by $V$ the vector space over $\mathbb{C}$ with basis
$\{e_1,\ldots,e_N\}$. It is known that $V$ is an
$H$-Yetter-Drinfeld module with action and coaction given by
$K_i\cdot e_j=q_{ij}e_j$ and $\delta_L(e_i)=K_i\otimes e_i $
respectively. The quantum shuffle product $\ast$ on $T(V)$ is
defined by the following inductive formula: \begin{eqnarray*}
\lefteqn{(e_{i_1}\otimes \cdots \otimes
e_{i_n})\ast(e_{j_1}\otimes \cdots \otimes
e_{j_n})}\\[3pt]
&=&e_{i_1}\otimes((e_{i_2}\otimes \cdots \otimes
e_{i_n})\ast(e_{j_1}\otimes \cdots \otimes
e_{j_n}))\\[3pt]
&&+q_{i_1j_1}\cdots q_{i_nj_1}e_{j_1}\otimes((e_{i_1}\otimes
\cdots \otimes e_{i_n})\ast(e_{j_2}\otimes \cdots \otimes
e_{j_n})).
\end{eqnarray*}Then the subalgebra $S(V)$ generated by $V$ is a Nichols algebra. By Theorem 15 in \cite{Ro}, $S_H(M)=S(V)\#H$ is
isomorphic, as a Hopf algebra, to the sub Hopf algebra $U_q^+$ of
the quantized universal enveloping algebra associated with $A$
when $G=\mathbb{Z}^N$ and $q$ is not a root of unity; $S_H(M)$ is
isomorphic, as a Hopf algebra, to the quotient of the restricted
quantized enveloping algebra $u_q^+$ by the two-sided Hopf ideal
generated by the elements $(K_i^l-1)$, $i=1,\ldots, N$ when
$G=(\mathbb{Z}/l)^N$ and $q$ is a primitive $l$-th root of unity.
Thus we obtain that

\begin{proposition}Both $U_q^+$ and $u_q^+$ possess idempotent Rota-Baxter algebra structures. \end{proposition}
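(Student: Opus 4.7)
The plan is to specialize the general machinery of the previous sections to the case at hand and then transport the resulting Rota-Baxter structure along Rosso's Hopf algebra isomorphism.

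Since $S(V)$ is a Nichols algebra, it is by definition an algebra in ${}^H_H\mathcal{YD}$, so I would first invoke Theorem 3.1 applied to $S(V)$. This shows that the smash product $S_H(M) = S(V)\# H$ is both a right and a left $H$-Hopf module algebra, and hence, by the main theorem of Section 2, carries the idempotent Rota-Baxter operators
\[
P_R(v\# h) = v\#\varepsilon(h)1_H
\]
together with the explicit $P_L$ written out in Theorem 3.1, each of weight $-1$.

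Next, I would pull these operators back along the Hopf algebra isomorphisms supplied by Theorem 15 of \cite{Ro}. For $G = \mathbb{Z}^N$ with $q$ not a root of unity, that result identifies $S_H(M)$ with $U_q^+$ as associative algebras, so $P_R$ (resp.\ $P_L$) transports directly to an idempotent Rota-Baxter operator of weight $-1$ on $U_q^+$. For $G = (\mathbb{Z}/l)^N$ with $q$ a primitive $l$-th root of unity, the same result identifies $S_H(M)$ with $u_q^+$ modulo the Hopf ideal generated by $K_i^l - 1$; since $u_q^+$ is by convention the restricted version, these relations already hold, and the same transfer yields a Rota-Baxter structure on $u_q^+$ itself.

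I do not expect any genuine obstacle here: the substantive work has been carried out in Sections 2 and 3. The only thing left to observe is the essentially tautological fact that an algebra isomorphism pulls back an idempotent Rota-Baxter operator of weight $\lambda$ to another idempotent Rota-Baxter operator of the same weight, which is immediate from the purely algebraic form of the defining identity.
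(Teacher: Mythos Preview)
Your proposal is correct and follows exactly the route the paper takes: the proposition is stated in the paper without a separate proof, as an immediate consequence of applying Theorem~3.1 to the Nichols algebra $S(V)$ and then transporting the resulting Rota--Baxter operators on $S(V)\#H$ along Rosso's isomorphism (Theorem~15 of \cite{Ro}). Your write-up simply makes this implicit argument explicit; the one small wrinkle---that in the root-of-unity case Rosso's theorem identifies $S_H(M)$ with a \emph{quotient} of $u_q^+$---is handled by you via a convention on $u_q^+$, and the paper itself does not elaborate further on this point either.
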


\section*{Acknowledgements}
This work was conceived during a visit to the Department of
Mathematics of Universi\'{e} Denis Diderot-Paris 7 in 2013. I am
grateful to Marc Rosso for his invitation and hospitality. I thank
Xin Fang for communications. Thanks also go to Leandro Vendramin
for his interest on this work and pointing out some typos in an
earlier version. Finally, I thank the referees for their careful
reading and comments. This work was partially supported by the
National Natural Science Foundation of China (Grant No. 11201067)
and a grant from DGUT (Grant No. ZF121006).

\bibliographystyle{amsplain}

\begin{thebibliography}{10}



\bibitem{A}Aguiar, M.: Prepoisson algebras, Lett. Math. Phys. \textbf{54}, 263-277
(2000).

\bibitem{AB}An, H., Bai, C.: From Rota-Baxter algebras to pre-Lie
algebras, J. Phys. A: Math. Theor. \textbf{41}, 015201 (2008).



\bibitem{AS}Andruskiewitsch, N., Schneider, H.-J.: Pointed Hopf
algebras.  New directions in Hopf algebras,  1-68, Math. Sci. Res.
Inst. Publ., \textbf{43} (2002), Cambridge Univ. Press, Cambridge.

\bibitem{AS2}Andruskiewitsch, N., Schneider, H.-J.: On the classification of finite-dimensional pointed Hopf algebras, Ann. Math. \textbf{171}
(2010), 375-417.

\bibitem{Bax}Baxter, G.: An analytic problem whose solution
follows from a simple algebraic identity, Pacific J. Math.
\textbf{10}, 731-742 (1960).

\bibitem{CK1}Connes, A., Kreimer, D.: Renormalization in quantum field theory and the Riemann-Hilbert problem. I. The Hopf algebra structure of graphs and the main theorem, Comm. Math. Phys. \textbf{210}, 249-273 (2000).

\bibitem{CK2}Connes, A., Kreimer, D.: Renormalization in quantum field theory and the Riemann-Hilbert problem. II. The $\beta$-function, diffeomorphisms and the renormalization group, Comm. Math. Phys. \textbf{216}, 215-241
(2001).

\bibitem{E}Ebrahimi-Fard, K.: Loday-type algebras and the
Rota-Baxter relation, Lett. Math. Phys. \textbf{61}, 139-147
(2002).


\bibitem{EGP}Ebrahimi-Fard, K., Gracia-Bondia, J. M., Patras,
F.: Rota-Baxter algebras and new combinatorial identities, Lett.
Math. Phys. \textbf{81},61-75 (2007).



\bibitem{EG2}Ebrahimi-Fard, K., Guo, L.: Rota-Baxter algebras and
dendriform algebras, J. Pure Appl. Algebra \textbf{212}, 320-339
(2008).


\bibitem{EG3}Ebrahimi-Fard, K., Guo, L.,: Multiple zeta values and Rota-Baxter algebras. Integers \textbf{8}, A4, 18 pp (2008).


\bibitem{EGK1}Ebrahimi-Fard, K., Guo L., Kreimer, D.: Spitzer's identity
and the algebraic birkhoff decomposition in pQFT, J. Phys. A:
Math. Gen. \textbf{37}, 11037-11052 (2004).

\bibitem{EGK2}Ebrahimi-Fard, K., Guo L. and Kreimer, D.: Integrabel
renormalization II: the general case, Ann. H. Poincar$\acute{e}$
\textbf{6}, 369-395 (2005).


\bibitem{FR}Fang, X., Rosso, M.:  Multi-brace cotensor Hopf
algebras and quantum groups, arXiv:1210.3096.

\bibitem{Guo}Guo, L.: An introduction to Rota-Baxter algebras,
Surveys of Modern Mathematics \textbf{4}, International Press,
Somerville, MA; Higher education press, Beijing, (2012).


\bibitem{GZ}Guo, L., Zhang, B.: Polylogarithms and multiple zeta values
from free Rota-Baxter algebras, Sci. China Math. \textbf{53},
2239-2258 (2010).

\bibitem{J}Jian, R.-Q.: From quantum quasi-shuffle algebras to braided Rota-Baxter
algebras, Lett. Math. Phys. \textbf{103}, 851-863 (2013).



\bibitem{LHB} Li, X., Hou, D., Bai, C.: Rota-Baxter
operators on pre-Lie algebras, J. Nonlinear Math. Phys.
\textbf{14}, 269-289 (2007).

\bibitem{Ra}Radford, D. E.: The structure of Hopf algebras with a projection, J. Algebra \textbf{92}, 322-347 (1985).

\bibitem{Ro}Rosso, M.:  Quantum groups and quantum shuffles,
Invent. Math. \textbf{133}, 399-416 (1998).

\bibitem{Rot}Rota, G.-C.: Baxter algebras and combinatorial identities I, II, Bull. Amer. Math. Soc. \textbf{75},
325-329,330-334 (1969).

\bibitem{Sw}Sweedler, M. E.: Hopf algebras, Benjamin, New York
(1969).








\end{thebibliography}

\end{document}